\newtheorem{theorem}{Theorem}[section]
\newtheorem{lemma}[theorem]{Lemma}
\theoremstyle{definition}
\newtheorem{definition}[theorem]{Definition}
\theoremstyle{remark}
\numberwithin{equation}{section}
\begin{document}
\title{Regularity of Solutions to the Navier-Stokes equations in $\dot{B}_{\infty,\infty}^{-1}$}
    \author{Gregory Seregin}
    \address{Mathematical Institute, University of Oxford, Andrew Wiles Building, Radcliffe Observatory Quarter,
Woodstock Road, Oxford OX2 6GG, United Kingdom}
    \email{seregin@maths.ox.ac.uk}
    
	\author{Daoguo Zhou}
	\address{School of Mathematics and Information Science, Henan Polytechnic University,
		Jiaozuo, Henan 454000, P. R. China}
	\email{daoguozhou@hpu.edu.cn}
\date{}
	\begin{abstract}
		We prove that if $u$ is a suitable weak solution to the three dimensional
		Navier-Stokes equations from the space $L_{\infty}(0,T;\dot{B}_{\infty,\infty}^{-1})$,
		then all scaled energy quantities of $u$ are bounded. As a  consequence,
		 it is shown that  any axially symmetric suitable weak solution $u$,  belonging 
		to  $L_{\infty}(0,T;\dot{B}_{\infty,\infty}^{-1})$, is 
		smooth.
	\end{abstract}
	
	\maketitle
	
	\section{Introduction}
	
	The main aim of this paper is to show  that suitable weak  solutions to the Navier-Stokes equations, whose $\dot{B}^{-1}_{\infty,\infty}$-norm is bounded, have the Type I singularities (or Type I blowups) only. To be more precise in the statement of our results, we need to define certain notions.
	\begin{definition}
		\label{sws} Let $\Omega$ be a domain in $\mathbb R^3$ and let $Q_T:=\Omega\times ]0,T[$. It is said that a pair of functions $v$ and $q$ is a suitable weak solution to the Navier-Stokes equations in $Q_T$ if the following conditions are fulfilled:
		
		(i) $v\in L_\infty(\delta,T;L_{2,loc}(\Omega))\cap L_{2}(\delta,T;W^1_{2,loc}(\Omega)),\quad q\in L_\frac 32(\delta,T;L_{\frac 32,loc}(\Omega))
		$ for any $\delta\in ]0,T]$;

		(ii) $v$ and $q$ satisfy the Navier-Stokes equations
		$$\partial_tv+v\cdot\nabla v-\Delta v=-\nabla q,\qquad {\rm div}\,v=0$$
		in $Q_T$ in the sense of distributions;
		
		(iii) for $Q(z_0,R)\subset \Omega\times ]0,T[$, the local energy inequality
		$$\int\limits_{B(x_0,R)}\varphi|v(x,t)|^2dx+2\int\limits^t_{t_0-R^2}\int\limits_{B(x_0,R)}\varphi|\nabla v|^2dxd\tau\leq 
		$$
		$$\leq \int\limits^t_{t_0-R^2}\int\limits_{B(x_0,R)}\bigg (|v|^2(\partial_t\varphi+\Delta \varphi)+v\cdot\nabla \varphi(|v|^2+2q)\bigg )dxd\tau		$$
	holds for a.a. $t\in ]t_0-R^2,t_0[$ and for all non-negative test functions $\varphi\in C^\infty_0(B(x_0,R)\\
	\times ]t_0-R^2,t_0+R^2[)$.	\end{definition}
	
	Let us introduce the following scaled energy quantities:
	$$ A(z_0,r):=\sup\limits_{t_0-r^2<t<t_0}\frac 1r\int\limits_{B(x_0,r)}|v(x,t)|^2dx,\qquad E(z_0,r):=\frac 1r\int\limits_{Q(z_0,r)}|\nabla v|^2dxdt,
	$$
	$$C(z_0,r):=\frac 1{r^2}\int\limits_{Q(z_0,r)}|v|^3dxdt,\qquad D(z_0,r):=\frac 1{r^2}\int\limits_{Q(z_0,r)}|q|^{\frac 3 2}dxdt,$$
	$$G(z_0,r):=\max\{A(z_0,r),E(z_0,r),C(z_0,r)\},$$$$ g(z_0,r):=\min\{A(z_0,r),E(z_0,r),C(z_0,r)\}.$$
	Here, $Q(z_0,r):=B(x_0,r)\times ]t_0-r^2,t_0[$ and $B(x_0,r)$ is the ball of radius $r$ centred at a point $x_0\in \mathbb R^3$.   
	
	 The important feature of the above quantities is that all of them are invariant with respect to the Navier-Stokes scaling.
	
	Our main result is as follows.
	\begin{theorem}
		\label{maintheorem} Let $\Omega=\mathbb R^3$. Assume that a pair $v$ and $q$ is a suitable weak solution to the Navier-Stokes equations in $Q_T$. Moreover, it is supposed that
		\begin{equation}
			\label{mainassumption}
			v\in {L_\infty(0,T; \dot{B}^{-1}_{\infty,\infty}(\mathbb R^3))}. 
		\end{equation}
	Then, for any $z_0\in \mathbb R^3\times ]0,T]$, we have the estimate
	\begin{equation}
		\label{mainresult}
		\sup\limits_{0<r<r_0}G(z_0,r)\leq c\{[C(z_0,1)+D(z_0,1)]r_0^{\frac 1 2}+\|v\|^2_{L_\infty(0,T; \dot{B}^{-1}_{\infty,\infty}(\mathbb R^3))}+\|v\|^6_{L_\infty(0,T;\dot{ B}^{-1}_{\infty,\infty}(\mathbb R^3))}	\},\end{equation} 	
where $r_0\leq \frac 12\min\{1,t_0\}$ and $c$ is an absolute positive constant.	
			\end{theorem}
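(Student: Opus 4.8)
\emph{Overall strategy.} I would prove \eqref{mainresult} by the classical iteration of Caffarelli--Kohn--Nirenberg / Ladyzhenskaya--Seregin type for the scaled quantities $A,E,C,D$, run from a fixed scale $\bar r\sim\min\{1,t_0\}$ (at which $C(z_0,1)$ and $D(z_0,1)$ are finite by Definition~\ref{sws}) downward to the scales $r<r_0$. The one genuinely new ingredient is an \emph{improved, localized Gagliardo--Nirenberg inequality} that compensates the integrability missing in $\dot B^{-1}_{\infty,\infty}$ by the critical Besov norm; everything else is the standard machinery, and the whole point is that the resulting recursion is \emph{sublinear}, so its orbit is bounded. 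Throughout put $M:=\|v\|_{L_\infty(0,T;\dot B^{-1}_{\infty,\infty}(\mathbb R^3))}$.

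\emph{The three ingredients.} (1) Testing the local energy inequality from Definition~\ref{sws}(iii) with a cut-off equal to $1$ on $Q(z_0,\theta r)$ and supported in $Q(z_0,r)$, and applying H\"older to the cubic and the pressure terms, one gets, for a fixed $\theta\in(0,\tfrac12)$,
\[
A(z_0,\theta r)+E(z_0,\theta r)\le c(\theta)\Big[C(z_0,r)^{2/3}+C(z_0,r)+C(z_0,r)^{1/3}D(z_0,r)^{2/3}\Big].
\]
(2) Splitting the pressure on $B(x_0,r)$ as $q=q_1+q_2$ with $-\Delta q_1=\partial_i\partial_j(\chi\,v_iv_j)$ and $q_2$ harmonic in $x$, Calder\'on--Zygmund bounds $q_1$ by $|v|^2$ and the interior estimate for harmonic functions supplies the gain, giving
\[
D(z_0,\theta r)\le c\,\theta\,D(z_0,r)+c(\theta)\,C(z_0,r)+(\text{subcritical}).
\]
(3) The key step: combining the heat characterization $\|f\|_{\dot B^{-1}_{\infty,\infty}}\simeq\sup_{s>0}\sqrt s\,\|e^{s\Delta}f\|_{L_\infty}$ with the refined Sobolev inequality $\|g\|_{L_4(\mathbb R^3)}^2\le c\,\|\nabla g\|_{L_2(\mathbb R^3)}\,\|g\|_{\dot B^{-1}_{\infty,\infty}(\mathbb R^3)}$ applied to $g=\varphi\,v(\cdot,t)$ with a spatial cut-off $\varphi$, followed by $\int_{B}|v|^3\le|B|^{1/4}\|v\|_{L_4(B)}^3$ and H\"older in $t$, one obtains
\[
C(z_0,\theta r)\le c(\theta)\,M^{3/2}\big(A(z_0,r)+E(z_0,r)\big)^{3/4}+(\text{subcritical}).
\]

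\emph{Closing the iteration.} Feeding (3) into (1) and (2) at the dyadic scales $r_k=\bar r\,\theta^{k}$ and setting $u_k:=A(z_0,r_k)+E(z_0,r_k)+C(z_0,r_k)+D(z_0,r_k)$, one arrives at a recursion of the schematic form
\[
u_{k+1}\le c\,M^{3/2}\,u_k^{3/4}+c\,M\,u_{k-1}^{1/2}+\varepsilon\,u_{k-1}+c\,\theta^{\delta k}
\]
for some $\delta>0$, where $\theta$ is chosen so small that $\varepsilon:=c\,\theta<1$ (this is exactly what makes the pressure feedback contractive). All powers of $u$ on the right being $<1$, a standard iteration lemma shows $\{u_k\}$ is bounded with $\limsup_k u_k\le c\,(M^2+M^6)$ --- the $M^6$ being the fixed point of $x\mapsto cM^{3/2}x^{3/4}$ and the $M^2$ that of $x\mapsto cMx^{1/2}$; this limit does not see the seed, which only influences the finitely many transient scales near $\bar r$ and is therefore absorbed into the constant $c=c(C(z_0,1),D(z_0,1))$. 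Interpolating within each dyadic block to reach arbitrary $r\in(0,r_0)$ changes nothing, and the subcritical remainders, summed over the scales below $r_0$, produce the term $r_0^{1/2}$. Since $G(z_0,r)\le u(z_0,r)$, this yields \eqref{mainresult}.

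\emph{Main obstacle.} The hard part is ingredient (3), more precisely the passage from the global inequality $\|g\|_{L_4}^2\le c\,\|\nabla g\|_{L_2}\|g\|_{\dot B^{-1}_{\infty,\infty}}$ to its localized form: here $v$ is only in $W^1_{2,\mathrm{loc}}$ while the Besov bound lives on all of $\mathbb R^3$, so one must control $\|\varphi v\|_{\dot B^{-1}_{\infty,\infty}}$, i.e. $\sqrt s\,\|e^{s\Delta}(\varphi v)\|_{L_\infty}$ for every $s>0$. Splitting at $s\sim(\theta r)^2$ and estimating the commutator $[e^{s\Delta},\varphi]v$ (which is not controlled by the Besov norm of $v$ alone) is the delicate point; it is also where the dependence on $C(z_0,1),D(z_0,1)$ and the subcritical powers of the scale enter. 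Once (3) is available in the clean scale-invariant form above, the rest --- verifying that the coupled recursion for $A,E,C,D$ is genuinely sublinear and stable, and that the transient and the $r_0^{1/2}$-remainder are accounted for --- is routine bookkeeping along well-trodden lines. (Alternatively, the same conclusion can be reached by a rescaling/compactness argument, but the direct iteration has the advantage of producing the explicit powers $M^2$ and $M^6$.)
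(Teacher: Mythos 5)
Your strategy coincides with the paper's: the same three ingredients (the local energy inequality, the pressure decay estimate via the Calder\'on--Zygmund/harmonic splitting, and the refined inequality $\|g\|_{L_4}^2\leq c\|\nabla g\|_{L_2}\|g\|_{\dot B^{-1}_{\infty,\infty}}$ applied to $\varphi v$), fed into the standard dyadic iteration. The only cosmetic difference is at the closing step: the paper linearizes the recursion first by Young's inequality with parameter $\delta=\theta$ (which is exactly where the exponents $M^2$ and $M^6$ arise, as $(M)^2$ and $(M^{3/2})^4$), obtaining $\mathcal{E}(\theta\rho)\leq\theta^{1/2}\mathcal{E}(\rho)+c(M^2\theta^{-2}+M^6\theta^{-11})$ and iterating linearly, whereas you keep the recursion sublinear and invoke an iteration lemma; both give the same bound.

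The genuine issue is that the step you yourself flag as the main obstacle --- controlling $\|\varphi v(\cdot,t)\|_{\dot B^{-1}_{\infty,\infty}}$ so that the global interpolation inequality can be localized --- is left unproved, and it is precisely the one new lemma the whole argument rests on. Moreover, your diagnosis there is overly pessimistic: the commutator $[e^{s\Delta},\varphi]f$ \emph{is} controlled by $\|f\|_{\dot B^{-1}_{\infty,\infty}}$ alone. This is Lemma \ref{localbesov}, proved in the Appendix by writing $u:=\varphi\, e^{t\Delta}f-e^{t\Delta}(\varphi f)$ as the Duhamel solution of $\partial_t u-\Delta u=-2\,\mathrm{div}(\nabla\varphi\,w)+w\Delta\varphi$ with $w=e^{t\Delta}f$ and estimating the two heat potentials directly against $\sup_{\tau>0}\sqrt\tau\|w(\cdot,\tau)\|_{L_\infty}$; the resulting constant depends only on $|\mathrm{spt}\,\varphi|$, so no subcritical remainder and no dependence on $C(z_0,1)$ or $D(z_0,1)$ enters at that stage (those quantities enter only as the seed $\mathcal{E}(1)$ of the iteration). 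Until you supply this lemma (or the weak-$L_4$ variant from Ledoux that the paper notes suffices), your ingredient (3), and hence inequality \eqref{C}, is not established and the proof is incomplete.
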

			
			Let us recall one of  definitions of the norm in the space $\dot{B}^{-1}_{\infty,\infty}(\mathbb R^3)=\{f \in S': \|f\|_{\dot{B}^{-1}_{\infty,\infty}(\mathbb R^3)}<\infty\}$, which is the following:	
	$$\|f\|_{\dot{B}^{-1}_{\infty,\infty}(\mathbb R^3)}:=\sup\limits_{t>0}t^\frac 1 2\|w\|_{L_\infty(\mathbb R^3)},$$
	where $S'$ is the space of tempered distributions,
	$w$ is the solution to the Cauchy problem for the heat equation with initial datum $f$.
\begin{definition}
	\label{Type1def} Assume that $z_0=(x_0,t_0)$ is a singular point of $v$, i.e., there is no parabolic vicinity of $z_0$ where $v$ is bounded. We call $z_0$ Type I singularity (or Type I blowup) if  there exists a positive number $r_1$  such that 
	$$\sup\limits_{0<r<r_1}g(z_0,r)<\infty.$$
\end{definition}	
	
According to Definition 	\ref{Type1def}, any suitable weak solution, satisfying    assumption (\ref{mainassumption}), has  Type I singularities only. In particular, arguments, used in paper \cite{Seregin2012}, show that axially symmetric suitable weak solutions to  the Navier-Stokes equations have no Type I blowups. This is an improvement of what has been known so far, see papers  \cite{and} and \cite{Seregin2012}, where condition (\ref{mainassumption}) is replaced by stronger one
$$v\in L_\infty(0,T;BMO^{-1}(\mathbb R^3)).$$
Regarding other regularity results on axially symmetric solutions to the Navier-Stokes equations, we refer to papers \cite{BZ2010,CL2002,CSYT2008,CSYT2009,HL2008,JX2003,KNSS2009,La1968,LZ2012,LNZ2016,LZ2017,
Pan2016,Seregin2009,Seregin2007,UY1968,WZ2012,Wei2016}.

Another important consequence is that the smallness of $\|v\|_{L\infty(0,T;\dot{B}^{-1}_{\infty,\infty}(\mathbb R^3))}$ implies regularity, see also \cite{CH2010,HL2017}.

\section{Proof of the Main Result}
	
	In this section,  Theorem \ref{maintheorem} is proved. First, we recall the known multiplicative inequality, see \cite{HMZW2011}.
	
	\begin{lemma}\label{interbesov} For any $u\in\dot{B}_{\infty,\infty}^{-1}(\mathbb{R}^{3})\cap\dot{H}^{1}(\mathbb{R}^{3})$,
		the following is valid:  
\begin{equation}\label{inebesov}
		\|u\|_{L_{4}(\mathbb{R}^{3})}\leq c\|u\|_{\dot{B}_{\infty,\infty}^{-1}(\mathbb{R}^{3})}^{\frac{1}{2}}\|\nabla u\|_{L_2{(\mathbb R^3)}}^{\frac{1}{2}},\end{equation}
where $\dot{H}^1(\mathbb R^3)$ is a homogeneous Sobolev space.
	\end{lemma}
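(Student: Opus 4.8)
The plan is to prove the multiplicative inequality $\|u\|_{L_4} \le c\|u\|_{\dot B^{-1}_{\infty,\infty}}^{1/2}\|\nabla u\|_{L_2}^{1/2}$ by a Littlewood–Paley decomposition combined with a frequency-splitting argument. Write $u = \sum_{j\in\mathbb Z}\Delta_j u$ where $\Delta_j$ are the standard dyadic frequency projectors, and fix a threshold $N\in\mathbb Z$ to be optimized at the end. First I would control the low-frequency part $u_{<N}:=\sum_{j<N}\Delta_j u$: by Bernstein's inequality $\|\Delta_j u\|_{L_\infty}\lesssim 2^{3j/2}\|\Delta_j u\|_{L_2}$, but more to the point, the heat-flow characterization of $\dot B^{-1}_{\infty,\infty}$ gives $\|\Delta_j u\|_{L_\infty}\lesssim 2^{j}\|u\|_{\dot B^{-1}_{\infty,\infty}}$ for each $j$, so summing the geometric series yields $\|u_{<N}\|_{L_\infty}\lesssim 2^{N}\|u\|_{\dot B^{-1}_{\infty,\infty}}$. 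For the high-frequency part $u_{\ge N}$, I would use Bernstein in the other direction together with the $L_2$-orthogonality: $\|u_{\ge N}\|_{L_2}^2 = \sum_{j\ge N}\|\Delta_j u\|_{L_2}^2 \lesssim \sum_{j\ge N}2^{-2j}\|\nabla\Delta_j u\|_{L_2}^2 \le 2^{-2N}\|\nabla u\|_{L_2}^2$, so $\|u_{\ge N}\|_{L_2}\lesssim 2^{-N}\|\nabla u\|_{L_2}$.

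Next I would interpolate. Using $\|u\|_{L_4}^4 = \int |u|^4 \le \|u\|_{L_\infty}^? \cdots$ is too lossy; instead the cleaner route is $\|u\|_{L_4}^2 \le \|u\|_{L_\infty}\,\|u\|_{L_2}$ — wait, that is not quite an identity either. The correct elementary bound is $\int|u|^4 \le \|u\|_{L_\infty}^2\int|u|^2 = \|u\|_{L_\infty}^2\|u\|_{L_2}^2$, hence $\|u\|_{L_4}\le \|u\|_{L_\infty}^{1/2}\|u\|_{L_2}^{1/2}$. But $u$ itself need not be in $L_\infty$ or $L_2$, so I would instead split and estimate $\|u\|_{L_4}\le \|u_{<N}\|_{L_4}+\|u_{\ge N}\|_{L_4}$, bounding $\|u_{<N}\|_{L_4}\le \|u_{<N}\|_{L_\infty}^{1/2}\|u_{<N}\|_{L_2}^{1/2}$ and $\|u_{\ge N}\|_{L_4}\le \|u_{\ge N}\|_{L_\infty}^{1/2}\|u_{\ge N}\|_{L_2}^{1/2}$. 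Combining with Bernstein ($\|u_{\ge N}\|_{L_\infty}\lesssim (\sum_{j\ge N}2^{3j/2}\|\Delta_j u\|_{L_2})$, which I would instead dominate by $\|\nabla u\|_{L_2}$-type quantities) this threatens to be messy. A slicker alternative, which I would actually adopt, is: for \emph{any} $N$,
\begin{equation*}
\|u\|_{L_4} \le C\big(2^{N/2}\|u\|_{\dot B^{-1}_{\infty,\infty}}^{1/2}\cdot 2^{-N/2}\|\nabla u\|_{L_2}^{1/2} + \text{(remainder)}\big),
\end{equation*}
where the structure $\|u\|_{L_4}^2 \lesssim \|u_{<N}\|_{L_\infty}\|u\|_{L_2}$ is replaced by the scale-balanced estimate $\|u\|_{L_4}^2 \lesssim \sup_j\|\Delta_j u\|_{L_\infty}\cdot\|u\|_{\dot B^{0}_{4/3,\cdots}}$ — here one really wants the bilinear/paraproduct form. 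The genuinely clean proof: interpolate the embeddings $\dot B^{-1}_{\infty,\infty}$ and $\dot B^{1}_{2,2}=\dot H^1$ to land in $\dot B^{0}_{4,4}\hookrightarrow L_4$. Precisely, real interpolation with parameter $\theta=1/2$ gives $[\dot B^{-1}_{\infty,\infty},\dot B^{1}_{2,2}]_{1/2,2}\hookrightarrow \dot B^{0}_{4,2}\hookrightarrow L_4$ (the indices $-1\cdot\frac12 + 1\cdot\frac12 = 0$ for smoothness, $\frac{1}{2\cdot\infty}+\frac{1}{2\cdot 2}=\frac14$ for integrability), and the interpolation inequality $\|u\|_{[X_0,X_1]_{1/2,2}}\lesssim \|u\|_{X_0}^{1/2}\|u\|_{X_1}^{1/2}$ is exactly \eqref{inebesov}.

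So the plan has two presentations and I would write up the second (interpolation) one as the main line, since it is shortest, and mention the first (direct Littlewood–Paley split, optimizing over $N$ by choosing $2^{2N}\sim \|\nabla u\|_{L_2}/\|u\|_{\dot B^{-1}_{\infty,\infty}}$) as the hands-on verification. \textbf{The main obstacle} is purely bookkeeping: one must (a) justify that $u\in \dot B^{-1}_{\infty,\infty}\cap \dot H^1$ indeed lies in the interpolation space with the claimed norm bound — i.e., get the constant to come out as a clean product of the two half-powers rather than a sum — which for the direct approach means carefully optimizing the free parameter $N$, and for the interpolation approach means invoking the standard fact $K(t,u;X_0,X_1)\le \min(t\|u\|_{X_1}, \|u\|_{X_0})$ and computing the resulting $J$- or $K$-functional integral; and (b) handling the homogeneous (rather than inhomogeneous) Besov spaces, so that the low-frequency divergence is controlled — this is where the assumption $u\in\dot H^1$ (which kills low frequencies via $\|u_{\ge N}\|_{L_2}\lesssim 2^{-N}\|\nabla u\|_{L_2}$ but also, together with $u\in S'$, pins down $u$ modulo polynomials) is essential and must be used explicitly. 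Since the lemma is quoted from \cite{HMZW2011}, I would in fact keep the write-up to the interpolation one-liner plus the remark that the elementary proof proceeds by the frequency split just described.
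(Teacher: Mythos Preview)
The paper does not prove this lemma at all: it is quoted as a known result from \cite{HMZW2011}, and the authors immediately remark that for their purposes only the weaker bound with $\|u\|_{L^{4,\infty}}$ on the left is needed, for which they cite the elementary heat-semigroup argument of Ledoux \cite{Ledoux2003}. So there is nothing to compare your argument against in the paper itself; any self-contained proof you supply goes beyond what the authors wrote.

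Your frequency-splitting sketch is essentially the Ledoux argument in Littlewood--Paley dress, and it is correct for the $L^{4,\infty}$ conclusion: split $u=e^{t\Delta}u+(u-e^{t\Delta}u)$ (equivalently, low/high frequencies at scale $2^N$), use $\|e^{t\Delta}u\|_{L_\infty}\le t^{-1/2}\|u\|_{\dot B^{-1}_{\infty,\infty}}$ and $\|u-e^{t\Delta}u\|_{L_2}\le t^{1/2}\|\nabla u\|_{L_2}$, and optimize over $t$ (or $N$) to get $\lambda^4|\{|u|>\lambda\}|\lesssim \|u\|_{\dot B^{-1}_{\infty,\infty}}^2\|\nabla u\|_{L_2}^2$. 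That is exactly what the paper needs and cites. One caution on your ``clean'' route: the real-interpolation identity you invoke, $(\dot B^{-1}_{\infty,\infty},\dot B^{1}_{2,2})_{1/2,2}=\dot B^{0}_{4,2}$, is not a standard theorem---real interpolation of Besov spaces with \emph{different} integrability indices $p_0\neq p_1$ does not in general return a Besov space with the interpolated $p$. If you want the genuine strong-$L_4$ statement rather than $L^{4,\infty}$, you should either follow \cite{HMZW2011} directly or run a sharper Littlewood--Paley argument (e.g.\ bound $\|\Delta_j u\|_{L_4}^2\le\|\Delta_j u\|_{L_\infty}\|\Delta_j u\|_{L_2}$ and sum carefully); the one-line interpolation as written does not close.
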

	In fact, a weaker version of (\ref{inebesov}) with $\|u\|_{L^{4,\infty}}$ instead of $ \|u\|_{L_{4}(\mathbb{R}^{3})}$ is needed. Here, $L^{4,\infty}(\mathbb R^3)$ is a weak Lebesgue space. An elementary proof of a weaker inequality is given in \cite{Ledoux2003}.	
	
	The second auxiliary statement is about cutting-off in the space $\dot B^{-1}_{\infty,\infty}(\mathbb R^3)$.
	\begin{lemma}\label{localbesov}
      Let $u\in \dot{B}_{\infty,\infty}^{-1}(\mathbb{R}^{3})$ and $\phi\in C_{0}^{\infty}(\mathbb{R}^{3})$. Then  
      $$\|u\phi\|_{\dot{B}_{\infty,\infty}^{-1}(\mathbb{R}^{3})}\leq c(|{\rm spt}\, \phi|)\|u\|_{\dot{B}_{\infty,\infty}^{-1}(\mathbb{R}^{3})}.$$
      \end{lemma}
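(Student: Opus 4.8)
The plan is to work directly from the heat-semigroup characterisation of the norm, namely $\|f\|_{\dot B^{-1}_{\infty,\infty}} = \sup_{t>0} t^{1/2}\|e^{t\Delta}f\|_{L_\infty}$. Fix $u\in\dot B^{-1}_{\infty,\infty}$ and $\phi\in C_0^\infty(\mathbb R^3)$, and set $K:={\rm spt}\,\phi$. I need to estimate $t^{1/2}\|e^{t\Delta}(u\phi)\|_{L_\infty}$ uniformly in $t>0$. The natural split is into small times and large times relative to a fixed length scale associated with $K$, say $R:={\rm diam}\,K$ (or any fixed number once $|K|$ is fixed).

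For small times $0<t\le R^2$: the idea is to commute $e^{t\Delta}$ past the multiplication by $\phi$. Write, using the Gaussian kernel $G_t(x)=(4\pi t)^{-3/2}e^{-|x|^2/(4t)}$,
\begin{equation}
e^{t\Delta}(u\phi)(x)=\int_{\mathbb R^3}G_t(x-y)\phi(y)u(y)\,dy
=\phi(x)\,(e^{t\Delta}u)(x)+\int_{\mathbb R^3}G_t(x-y)\bigl(\phi(y)-\phi(x)\bigr)u(y)\,dy.
\end{equation}
The first term is bounded by $\|\phi\|_{L_\infty}\|e^{t\Delta}u\|_{L_\infty}\le \|\phi\|_{L_\infty}\,t^{-1/2}\|u\|_{\dot B^{-1}_{\infty,\infty}}$, which gives exactly the required $t^{-1/2}$ decay. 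For the commutator term one uses $|\phi(y)-\phi(x)|\le\|\nabla\phi\|_{L_\infty}|x-y|$, so that term is controlled by $\|\nabla\phi\|_{L_\infty}$ times $\bigl(|\,\cdot\,|G_t * |u|\bigr)(x)$; but $u$ itself need not be in $L_\infty$, so instead I would write $\phi(y)-\phi(x)=(\phi(y)-\phi(x))\psi(y)$ where $\psi\in C_0^\infty$ equals $1$ on $K$, absorb $(\phi(y)-\phi(x))\psi(y)$ — which is a Lipschitz, compactly supported function of $y$ with support in a fixed enlargement of $K$ — and then test $u\in\dot B^{-1}_{\infty,\infty}$ against it. Concretely, $\int G_t(x-y)(\phi(y)-\phi(x))\psi(y)u(y)\,dy = \langle u, \eta_{x,t}\rangle$ where $\eta_{x,t}(y):=G_t(x-y)(\phi(y)-\phi(x))\psi(y)$, and one estimates this pairing by the $\dot B^{-1}_{\infty,\infty}$–$\dot B^{1}_{1,1}$ (or Hölder/heat-kernel) duality: since $\dot B^{-1}_{\infty,\infty}$ acts on $\dot B^{1}_{1,1}$, it suffices to bound $\|\eta_{x,t}\|_{\dot B^1_{1,1}}$ by $c(K)t^{-1/2}$ uniformly in $x$, which follows from the explicit Gaussian bounds $\|\nabla_y^k G_t\|_{L_1}\lesssim t^{-k/2}$ together with the extra vanishing factor $|\phi(y)-\phi(x)|\lesssim |x-y|$ that upgrades one power of $t^{-1/2}$ to $t^0$ on the support-localised piece.

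For large times $t\ge R^2$: here I would not commute but rather exploit that $u\phi$ is compactly supported and localised, so it lies in, say, $\dot B^{-1}_{\infty,\infty}$ with norm comparable to a negative-order Sobolev norm over the fixed bounded region. The cleanest route is: for $t\ge R^2$, $e^{t\Delta}(u\phi)(x)=\langle u, G_t(x-\cdot)\phi\rangle$, and $\|G_t(x-\cdot)\phi\|_{\dot B^1_{1,1}}\le c(K)(\|G_t\|_{L_\infty(K-x)}\|\nabla\phi\|_{L_1}+\|\nabla G_t\|_{L_\infty(K-x)}\|\phi\|_{L_1})\le c(K)t^{-3/2}$, which decays faster than $t^{-1/2}$ and is therefore acceptable after multiplying by $t^{1/2}$. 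Combining the two regimes gives $\sup_{t>0}t^{1/2}\|e^{t\Delta}(u\phi)\|_{L_\infty}\le c(K,\phi)\|u\|_{\dot B^{-1}_{\infty,\infty}}$, and tracking the dependence one sees the constant depends on $\phi$ only through $\|\phi\|_{L_\infty}$, $\|\nabla\phi\|$-type quantities and the size of its support, consistent with the stated $c(|{\rm spt}\,\phi|)$ after rescaling normalisations of $\phi$.

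The main obstacle is the commutator term at small times: one must be careful that $u$ is only a tempered distribution in a negative regularity class, so every manipulation ($G_t*(u\phi)$, the pointwise product $\phi\cdot e^{t\Delta}u$, the splitting) has to be justified by duality against a test function rather than by pointwise integration, and the key quantitative point is that the factor $\phi(y)-\phi(x)$ must be used to gain the one power of $t^{1/2}$ that compensates for the derivative hitting the Gaussian — without that cancellation the commutator would only be $O(t^{-1})$, which is too singular as $t\to0$. Making the duality pairing $\langle u,\eta_{x,t}\rangle$ rigorous and bounding $\|\eta_{x,t}\|$ in the predual norm, uniformly in $x$, is the technical heart of the argument.
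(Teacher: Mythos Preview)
Your strategy---heat-semigroup norm, commutator $e^{t\Delta}(u\phi)-\phi\,e^{t\Delta}u$, small/large-time split---is viable, but there is a concrete slip in the small-time step: the identity $\phi(y)-\phi(x)=(\phi(y)-\phi(x))\psi(y)$ is \emph{false} whenever $x\in{\rm spt}\,\phi$ and $y\notin{\rm spt}\,\psi$, since then $\phi(y)-\phi(x)=-\phi(x)\neq0$ while $\psi(y)=0$. The commutator kernel $\eta_{x,t}(y)=G_t(x-y)(\phi(y)-\phi(x))$ is genuinely \emph{not} compactly supported in $y$. The repair is simply to drop $\psi$: the undecorated $\eta_{x,t}$ is already Schwartz, and using the global Lipschitz bound $|\phi(y)-\phi(x)|\le\|\nabla\phi\|_{L_\infty}|x-y|$ one checks directly that $\|\nabla_y\eta_{x,t}\|_{L_1}\le c(\phi)$ uniformly in $x$ and $t$, which is enough for your duality step. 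You then still have to justify the estimate $|\langle u,\eta_{x,t}\rangle|\le\|u\|_{\dot B^{-1}_{\infty,\infty}}\|\eta_{x,t}\|_{\dot B^{1}_{1,1}}$ for a test function with no moment cancellation; this is exactly the ``technical heart'' you flag at the end, and it requires real care with realisations of homogeneous Besov spaces.

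The paper's proof bypasses this delicacy by taking a different route to the \emph{same} commutator. Writing $w=e^{t\Delta}u$ and $w_\varphi=e^{t\Delta}(\varphi u)$, the difference $\varphi w-w_\varphi$ solves an inhomogeneous heat equation with source $-2\,{\rm div}(w\nabla\varphi)+w\Delta\varphi$ and zero initial data; Duhamel expresses it as a space-time integral whose integrand involves only the \emph{smooth} function $w$, bounded pointwise by $A\tau^{-1/2}$ with $A=\|u\|_{\dot B^{-1}_{\infty,\infty}}$. Everything then reduces to explicit Gaussian bookkeeping (splitting the $\tau$-integral at $t/2$ and the spatial integrals at scale $1$). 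No Besov duality, no predual norms---just calculus with the heat kernel. Your approach is more harmonic-analytic and would transfer to other negative-regularity settings; the paper's PDE/Duhamel route is deliberately elementary and self-contained.
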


We have not found out a proof of Lemma \ref{localbesov} in the literature and presented  it in Appendix. Our proof is elementary and based on typical PDE's arguments.
A scaled version of the previous lemma is as follows.
\begin{lemma}\label{localversion} For any $u\in\dot{B}_{\infty,\infty}^{-1}(\mathbb{R}^{3})\cap H^{1}(B(2))$, the estimate
		\begin{equation}\label{localver}
		\|u\|_{L^{4,\infty}(B)}\leq c\|u\|_{\dot{B}_{\infty,\infty}^{-1}(\mathbb{R}^{3})}^{\frac{1}{2}}\|u\|^{\frac{1}{2}}_{H^{1}(B(2))}.
		\end{equation}
is valid for a universal constant $c$. Moreover, if $u\in\dot{B}_{\infty,\infty}^{-1}(\mathbb{R}^{3})\cap H^{1}(B(x_0,2R))$, then
 \begin{equation}\label{localscaled}
		\|u\|_{L^{4,\infty}(B_{R}(x_{0}))}\leq c\|u\|_{\dot{B}_{\infty,\infty}^{-1}(\mathbb{R}^{3})}^{\frac{1}{2}}\left(\|\nabla u\|_{L_{2}(B_{2R}(x_{0}))}+\frac{1}{R}\|u\|_{L_{2}(B_{2R}(x_{0}))}\right)^{\frac{1}{2}}
		\end{equation}
		with a universal constant $c$.
	\end{lemma}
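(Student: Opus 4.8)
The plan is to derive \eqref{localver} from Lemma~\ref{localbesov} together with the multiplicative inequality \eqref{inebesov}, and then obtain \eqref{localscaled} by a standard rescaling argument. First I would fix a cutoff function $\phi\in C_0^\infty(B(2))$ with $0\le\phi\le1$ and $\phi\equiv1$ on $B=B(1)$, and apply \eqref{inebesov} to the function $u\phi\in\dot B^{-1}_{\infty,\infty}(\mathbb R^3)\cap\dot H^1(\mathbb R^3)$ (it lies in $\dot B^{-1}_{\infty,\infty}$ by Lemma~\ref{localbesov}, and in $\dot H^1$ because $u\in H^1(B(2))$ and $\phi$ is compactly supported). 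This gives
\begin{equation*}
\|u\|_{L^{4,\infty}(B)}\le\|u\phi\|_{L_4(\mathbb R^3)}\le c\|u\phi\|_{\dot B^{-1}_{\infty,\infty}(\mathbb R^3)}^{\frac12}\|\nabla(u\phi)\|_{L_2(\mathbb R^3)}^{\frac12},
\end{equation*}
where the first inequality uses $\phi\equiv1$ on $B$ and the continuous embedding $L_4\hookrightarrow L^{4,\infty}$. By Lemma~\ref{localbesov}, $\|u\phi\|_{\dot B^{-1}_{\infty,\infty}}\le c\|u\|_{\dot B^{-1}_{\infty,\infty}}$ with $c$ depending only on $|\mathrm{spt}\,\phi|\le|B(2)|$, hence universal. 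For the gradient term, $\nabla(u\phi)=\phi\nabla u+u\nabla\phi$, so $\|\nabla(u\phi)\|_{L_2(\mathbb R^3)}\le\|\nabla u\|_{L_2(B(2))}+\|\nabla\phi\|_{L_\infty}\|u\|_{L_2(B(2))}\le c\|u\|_{H^1(B(2))}$ with $c$ depending only on the fixed $\phi$. Combining these bounds yields \eqref{localver}.

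For \eqref{localscaled} I would rescale: given $u$ on $B(x_0,2R)$, set $v(y):=u(x_0+Ry)$ for $y\in B(2)$. Then $v\in H^1(B(2))$, and the homogeneous Besov norm scales as $\|v\|_{\dot B^{-1}_{\infty,\infty}(\mathbb R^3)}=R^{-1}\|u(x_0+R\,\cdot)\|_{\dot B^{-1}_{\infty,\infty}}$; more to the point, one should observe that $\dot B^{-1}_{\infty,\infty}$ is scaling invariant under $f\mapsto Rf(x_0+R\,\cdot)$, i.e. $\|Rf(x_0+R\cdot)\|_{\dot B^{-1}_{\infty,\infty}}=\|f\|_{\dot B^{-1}_{\infty,\infty}}$ (which follows directly from the heat-kernel definition of the norm by changing variables $t\mapsto R^2t$). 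Applying \eqref{localver} to $v$, then undoing the change of variables and tracking the powers of $R$ — the $L^{4,\infty}$ norm in three dimensions scales like $R^{-3/4}$, the $\dot H^1$ norm like $R^{-1/2}$ (contributing $R^{-1/4}$ after the square root), and the $L_2$ norm in $H^1(B(2))$ picks up the factor $1/R$ shown in \eqref{localscaled} — the $R$-powers match and one arrives at \eqref{localscaled}.

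The only genuinely delicate point is bookkeeping the scaling exponents, and in particular confirming that the Besov norm truly is invariant under the indicated rescaling so that no residual power of $R$ attaches to the $\dot B^{-1}_{\infty,\infty}$ factor; everything else is a routine application of the product rule and the two cited lemmas. I would present the $R=1$ case in full and then say the general case follows by the scaling $u\mapsto u(x_0+R\,\cdot)$, spelling out only the exponent count. One should also note at the outset that it suffices to prove the bound with $\|\nabla u\|_{L_2}+R^{-1}\|u\|_{L_2}$ replaced by $\|u\|_{H^1}$ in the unscaled case because these two quantities are comparable when $R=1$ (on $B(2)$, up to universal constants), so the two displayed estimates \eqref{localver} and \eqref{localscaled} are really one statement seen at two scales.
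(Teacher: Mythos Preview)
Your proposal is correct and follows essentially the same route as the paper: apply the multiplicative inequality \eqref{inebesov} to $u\phi$ with a fixed cutoff $\phi\in C_0^\infty(B(2))$, invoke Lemma~\ref{localbesov} to control $\|u\phi\|_{\dot B^{-1}_{\infty,\infty}}$, and then deduce \eqref{localscaled} by the shift-and-scale $x=x_0+Ry$. The paper's proof is terser (it does not spell out the product-rule bound on $\nabla(u\phi)$ or the exponent bookkeeping in the rescaling), but the argument is the same.
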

	Here, we use notation for the ball centred at the origin $B(R)=B(0,R)$ and $B=B(1)$.
	\begin{proof}
It follows from Lemma \ref{interbesov} that for all $\phi\in C_{0}^{\infty}(\mathbb{R}^{3})$, 
		$$
		\|u\phi\|_{L^{4,\infty}(\mathbb{R}^{3})}\leq c\|u\phi\|_{\dot{B}_{\infty,\infty}^{-1}(\mathbb{R}^{3})}^{\frac{1}{2}}\|u\phi\|_{\dot{H}^{1}(\mathbb{R}^{3})}^{\frac{1}{2}}.
		$$
Taking a cut-off function $\phi$ such that $\phi=1$ in
		$B$, $\phi=0$ out of $B(2)$, and $0\leq\phi\leq1$ for $1\leq|x|\leq2$,
		we get inequality \eqref{localver} from Lemma \ref{localbesov}.
		
		To prove inequality \eqref{localscaled}, one  can use scaling and shift  $x=x_0+Ry$, $x\in B(x_0,2R)$, $y\in B(2)$   in \eqref{localver}.
	\end{proof}

In order to prove the main result, we need the following auxiliary inequalities for  $C(z_{0},r)$.
	\begin{lemma}
		For  any $0<r\leq R<\infty$, we have
		\begin{equation}\label{C}
		C(z_{0},r)\leq c\|u\|_{L_{\infty}(0,T;\dot{B}_{\infty,\infty}^{-1}(\mathbb{R}^{3}))}^{\frac{3}{2}}\left(A^{\frac{3}{4}}(z_{0},2r)+E^{\frac{3}{4}}(z_{0},2r)\right),
		\end{equation}
		and
		\begin{equation}\label{C2}
		C(z_{0},r)\leq c\|u\|_{L_{\infty}(0,T;\dot{B}_{\infty,\infty}^{-1}(\mathbb{R}^{3}))}^{\frac{3}{2}}
		\left(\frac{R}{r}\right)^{\frac{3} {4}}\left(A^{\frac{3}{4}}(z_{0},R)+E^{\frac{3}{4}}(z_{0},R)\right).
		\end{equation}
	\end{lemma}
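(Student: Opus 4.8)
The plan is to estimate $C(z_0,r)$ by interpolating the $L_3$-norm of $v$ on a parabolic cylinder between the weak-$L_4$ bound coming from Lemma \ref{localversion} and an $L_2$-bound coming from the energy. Fix $0<r\leq R$. For a.e.\ time $t\in\,]t_0-r^2,t_0[$ we apply the scaled local estimate \eqref{localscaled} with the ball $B(x_0,2r)\subset B(x_0,2R)$, obtaining
$$\|v(\cdot,t)\|_{L^{4,\infty}(B(x_0,r))}\leq c\|v\|_{L_\infty(0,T;\dot B^{-1}_{\infty,\infty})}^{1/2}\left(\|\nabla v(\cdot,t)\|_{L_2(B(x_0,2r))}+\tfrac1r\|v(\cdot,t)\|_{L_2(B(x_0,2r))}\right)^{1/2}.$$
Then I would use the standard interpolation inequality on a ball of radius $r$ in $\mathbb R^3$,
$$\|f\|_{L_3(B(x_0,r))}\leq c\,\|f\|_{L^{4,\infty}(B(x_0,r))}^{\theta}\,\|f\|_{L_2(B(x_0,r))}^{1-\theta},$$
for the appropriate exponent $\theta$ (so that the integrability exponents balance: $1/3=\theta/4+(1-\theta)/2$, giving $\theta=2$, which is not admissible directly — so instead one should interpolate $L_3$ between $L^{4,\infty}$ and, say, $L_2$ with a genuine Hölder/Lorentz-space argument, or more cleanly between $L^{4,\infty}$ and $L^{12/5}$ etc.). The cleaner route is: $L_3$ embeds into the Lorentz interpolation $[L^{4,\infty},L^{2}]$ with the correct weight, so $\|f\|_{L_3(B(x_0,r))}^3\leq c\,\|f\|_{L^{4,\infty}(B(x_0,r))}^{?}\,\|f\|_{L_2(B(x_0,r))}^{?}$ with exponents summing to $3$; carrying this out yields a bound of the form $\|v(\cdot,t)\|_{L_3(B(x_0,r))}^3\leq c\,\|v(\cdot,t)\|_{L^{4,\infty}(B(x_0,r))}^{3/2}\,\|v(\cdot,t)\|_{L_2(B(x_0,r))}^{3/2}$ once one checks the numerology.

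Inserting the pointwise-in-time bound for $\|v(\cdot,t)\|_{L^{4,\infty}}$ and integrating in $t$ over $]t_0-r^2,t_0[$, I would use the Cauchy–Schwarz (or Hölder) inequality in time to separate the factor $\big(\int\|\nabla v\|_{L_2(B(x_0,2r))}^2\,dt\big)^{3/4}$, which is exactly $(rE(z_0,2r))^{3/4}$ up to the scaling normalisation, from the factor $\sup_t\|v(\cdot,t)\|_{L_2(B(x_0,2r))}^{\cdots}$, which is controlled by $(rA(z_0,2r))^{\cdots}$. The term $\tfrac1r\|v\|_{L_2}$ contributes the $A$-part alone. Tracking all powers of $r$ carefully, the scale-invariant combination $C(z_0,r)=r^{-2}\int_{Q(z_0,r)}|v|^3$ should come out as $c\|v\|_{L_\infty(0,T;\dot B^{-1}_{\infty,\infty})}^{3/2}\big(A^{3/4}(z_0,2r)+E^{3/4}(z_0,2r)\big)$, which is \eqref{C}.

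For \eqref{C2} the idea is the same, except that one cannot directly replace $B(x_0,2r)$ by $B(x_0,R)$ in the energy and $A$-quantities because those are themselves $r$-dependent normalisations. So I would run the same argument but keep the ball at radius $2R$ in the application of \eqref{localscaled}, i.e.\ estimate $\|v(\cdot,t)\|_{L^{4,\infty}(B(x_0,r))}$ by the right-hand side over $B(x_0,2R)$, and integrate $|v|^3$ only over $Q(z_0,r)$; the mismatch between the cylinder of integration (radius $r$) and the cylinder where the energy lives (radius $2R$, hence up to $R$) produces the factor $(R/r)^{3/4}$ after redoing the $r$-power bookkeeping — essentially because $r^{-2}\cdot(\text{time length }r^2)^{1/4}\cdot(\text{space gradient over radius }R)^{3/4}$ forces a deficit that is filled by $(R/r)^{3/4}$. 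I expect the main obstacle to be the careful bookkeeping of the powers of $r$ and $R$ together with getting a legitimate Lorentz-space interpolation inequality for the $L_3$-norm (rather than the naïve Hölder exponent $\theta=2$ which is inadmissible); once those two pieces are in place the estimates follow by Hölder in time.
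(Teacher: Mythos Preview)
Your interpolation step is where the argument goes wrong. The relation $1/3=\theta/4+(1-\theta)/2$ gives $\theta=2/3$, not $\theta=2$; the correct Lorentz interpolation is
\[
\|f\|_{L_3(B(x_0,r))}^3 \le c\,\|f\|_{L^{4,\infty}(B(x_0,r))}^{2}\,\|f\|_{L_2(B(x_0,r))},
\]
not the $3/2$--$3/2$ split you propose. If you feed \eqref{localscaled} into this, the Besov norm enters only to the first power, so you would arrive at an estimate of the shape $C(z_0,r)\le c\,\|u\|_{L_\infty(0,T;\dot B^{-1}_{\infty,\infty})}\,(A+E)(z_0,2r)$ rather than the stated bound with $\|u\|^{3/2}$ and $A^{3/4}+E^{3/4}$. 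In other words, even after repairing the exponents your route does not prove the inequality as written.

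The paper avoids all of this by using the much simpler finite-measure embedding
\[
\|u(\cdot,t)\|_{L_3(B(x_0,r))}\le c\,r^{1/4}\,\|u(\cdot,t)\|_{L^{4,\infty}(B(x_0,r))},
\]
which is just H\"older's inequality in Lorentz spaces on a ball of volume $\sim r^3$. Cubing, inserting \eqref{localscaled}, and applying H\"older in $t$ (with exponents $4/3$ and $4$ over an interval of length $\sim r^2$) immediately yields \eqref{C} with the exponent $3/2$ on the Besov norm and $3/4$ on $A$ and $E$. No interpolation against $L_2$ is needed.

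For \eqref{C2} you are working much too hard: once \eqref{C} is known, the trivial monotonicity $A(z_0,2r)\le c\,(R/r)\,A(z_0,R)$ and $E(z_0,2r)\le c\,(R/r)\,E(z_0,R)$ (enlarge the domain and compare the normalising radii) gives the factor $(R/r)^{3/4}$ at once. There is no need to rerun the argument over $B(x_0,2R)$.
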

	\begin{proof} Obviously, \eqref{C2} easily follows from \eqref{C}. So, we need to prove the first inequality only. By the H\"older inequality, we have 
	$$\|u(\cdot,t)\|_{L_3(B(x_0,r)}\leq cr^\frac 14\|u(\cdot,t)\|_{L^{4,\infty}(B(x_0,r)}
	$$
	and thus, by \eqref{localscaled},
		$$
			C(z_{0},r) =\frac{1}{r^{2}}\int_{t_{0}-r^{2}}^{t_{0}}\|u(\cdot,t)\|_{L_{3}(B(x_{0},r))}^{3}dt
			$$$$\leq c\frac{1}{r^{\frac{3}{4}}}\|u\|_{L_\infty(0,T;\dot{B}_{\infty,\infty}^{-1}(\mathbb{R}^{3}))}^{\frac{3}{2}}\Big(\int_{t_{0}-(2r)^{2}}^{t_{0}}\|\nabla u(\cdot,t)\|_{L_{2}(B(x_{0},2r))}^{2}+$$$$+\frac{1}{r^{2}}\|u(\cdot,t)\|_{L_{2}(B(x_{0},2r))}^{2}dt\Big)^{\frac{3}{4}}
			$$$$\leq c\frac{1}{r^{\frac{3}{4}}}\|u\|_{L_\infty(0,T;\dot{B}_{\infty,\infty}^{-1}(\mathbb{R}^{3}))}^{\frac{3}{2}}\Big(\int_{t_{0}-(2r)^{2}}^{t_{0}}\|\nabla u(\cdot,t)\|_{L_{2}(B(x_{0},2r))}^{2}dt+
			$$$$+\sup\limits_{-(2r)^{2}+t_0\leq t<t_0}\|u(\cdot,t)\|_{L_{2}(B(x_{0},2r))}^{2}\Big)^{\frac{3}{4}}.
		$$
		This completes the proof of inequality \eqref{C}.
		
	\end{proof}
	
	Now we are going to jusify our main result.
	\begin{proof}[Proof of Theorem \ref{maintheorem}] From the local
		energy inequality, it follows that, for any $0<r<\infty$,
		\begin{equation}\label{en1}
		A(z_{0},r)+E(z_{0},r)\leq c\left(C^{\frac{2}{3}}(z_{0},2r)+C(z_{0},2r)+D(z_{0},2r)\right).
		\end{equation}
		For the pressure $q$, we have the decay estimate
		\begin{equation}
		D(z_{0},r)\leq c\left(\frac{r}{R}D(z_{0},R)+\left(\frac{R}{r}\right)^{2}C(z_{0},R)\right).\label{pressureD}
		\end{equation}
		which is valid for any $0<r<R<\infty$.		
		
		Assume that $0<r\leq\frac \rho 4<\rho\leq1$. Combining \eqref{pressureD} and  \eqref{en1}, we find
		$$
			A(z_{0},r)+E(z_{0},r)+D(z_{0},r) \leq  
		$$$$ \leq c\left(C^{\frac{2}{3}}(z_{0},2r)+C(z_{0},2r)+\left(\frac{\rho}{r}\right)^{2}C(z_{0},\frac{\rho}{2})+\frac{r}{\rho}D(z_{0},\frac{\rho}{2})\right).
		$$
		Now, let us  estimate each term on the right hand side of the last inequality. From  \eqref{C}, \eqref{C2}, and Young's inequality with
		an arbitrary positive constant $\delta$,  we can derive
		\[C(z_0,2r)\leq c\delta\left(A(z_0,\rho)+E(z_0,\rho)\right)+c\delta^{-3}\|u\|_{L^{\infty}(0,T;\dot{B}_{\infty,\infty}^{-1}(\mathbb{R}^3))}^6\left(\frac{\rho}{r}\right)^3.\]
		Similarly, 
		\[C^{\frac{2}{3}}(z_0,2r)\leq c\delta\left(A(z_0,\rho)+E(z_0,\rho)\right)+c\delta^{-1}\|u\|_{L^{\infty}(0,T;\dot{B}_{\infty,\infty}^{-1}(\mathbb{R}^3))}^2\left(\frac{\rho}{r}\right),\]
		and 
		\[(\frac{\rho}{r})^{2}C(z_0,\frac{\rho}{2})\leq c \delta\left(A(z_0,\rho)+E(z_0,\rho)\right)+c\delta^{-3}\|u\|_{L^{\infty}(0,T;\dot{B}_{\infty,\infty}^{-1}(\mathbb{R}^3))}^6\left(\frac{\rho}{r}\right)^8.\]
		Denote $\mathcal{E}(r)=A(z_0,r)+E(z_0,r)+D(z_0,r)$. By a simple inequality $D(z_0,\rho/2)\leq cD(z_0,\rho)$, 
	$$\mathcal{E}(r)\leq c (\delta+\frac r \rho) \mathcal{E}(\rho)+c\Big\{\|u\|_{L_{\infty}(0,T;\dot{B}_{\infty,\infty}^{-1}(\mathbb{R}^3))}^2\Big(\frac{\rho}{r}\Big)\delta^{-1}+$$
	$$+\|u\|_{L_{\infty}(0,T;\dot{B}_{\infty,\infty}^{-1}(\mathbb{R}^3))}^6\Big[\Big(\frac{\rho}{r}\Big)^3+\Big(\frac{\rho}{r}\Big)^8\Big]\delta^{-3}\Big\}.
	$$

		Letting  $r=\theta \rho$ and $\delta=\theta$ and  picking up $\theta$ such that $2c\theta^{1/2}\leq 1$, we find
		$$\mathcal{E}(\theta \rho)\leq \theta^{1/2}\mathcal{E}(\rho)+c\left\{\|u\|_{L_{\infty}(0,T;\dot{B}_{\infty,\infty}^{-1}(\mathbb{R}^3))}^2\theta^{-2} +\|u\|_{L_{\infty}(0,T;\dot{B}_{\infty,\infty}^{-1}(\mathbb{R}^3))}^{6}\theta^{-11}\right\}.$$

		Standard iteration gives us that for  $0<r\leq \frac{1}{2}$,
		$$
		\mathcal{E}(r)\leq c\left(r^{\frac{1}{2}}\mathcal{E}(1)+\|u\|_{L_{\infty}(0,T;\dot{B}_{\infty,\infty}^{-1}(\mathbb{R}^3))}^2+\|u\|_{L_{\infty}(0,T;\dot{B}_{\infty,\infty}^{-1}(\mathbb{R}^3))}^6\right).
		$$
		
	Taking into account  \eqref{C}, we get in addition that
		
		$$
		C(z_{0},r)\leq c\left(r^{\frac{1}{2}}\mathcal{E}(1)+\|u\|_{L_{\infty}(0,T;\dot{B}_{\infty,\infty}^{-1}(\mathbb{R}^3))}^6\right).
		$$
		This completes the proof of Theorem \ref{maintheorem}.
	\end{proof} 
   \appendix 
   
   \section{Proof of Lemma \ref{localbesov}.} We let $w(\cdot,t)=S(t)f(\cdot)$ and 
   $w_\varphi(\cdot,t) =S(t)\varphi f(\cdot)$, where $S(t)$ is a solution operator of the  Cauchy problem for the heat equation with  the initial data $f$ and $\varphi f$, respectevely.
   Then $u:=w\varphi-w_\varphi$ satisfies the equation
   $$\partial_tu-\Delta u=-2{\rm div}\,(\nabla \varphi w)+w\Delta\varphi$$
   and the initial condition $u(\cdot,0)=0$. A unique solution to the problem is as follows:
  $$ u(x,t)=I+J,$$
where 
$$I=-\int\limits^t_0\int\limits_{\mathbb R^3}\Gamma(x-y,t-\tau)
(2{\rm div}\,(\nabla \varphi w))(y,\tau)dy d\tau,$$
$$J=\int\limits^t_0\int\limits_{\mathbb R^3}\Gamma(x-y,t-\tau)
(w\Delta \varphi)(y,\tau)dy d\tau,$$
and $\Gamma$ is the heat kernel.

Let us evaluate $I$. We abbreviate 
$$A:=\|f\|_{\dot B^{-1}_{\infty,\infty}(\mathbb R^3)}=\sup\limits_{t>0}\sqrt t\|w(\cdot,t)\|_{L_\infty(\mathbb R^3)}$$
and $\Omega={\rm spt}\,\varphi$. Then
we have 
$$\sqrt t|I|\leq 2A\sqrt t\int\limits^t_0\frac 1{\sqrt\tau} \int\limits_\Omega
\frac 1{(4\pi(t-\tau))^\frac 32}\exp\Big\{-\frac {|x-y|^2}{4(t-\tau)}\Big\}\frac {|x-y|}{t-\tau}dy d\tau\leq 
$$
$$\leq cA\int\limits^t_0\sqrt{\frac t\tau}\frac 1{(t-\tau)^2}\int\limits_\Omega\exp\Big\{-\frac {|x-y|^2}{4(t-\tau)}\Big\}\frac {|x-y|}{\sqrt{t-\tau}}dy d\tau=
$$
$$=cA\Big[\int\limits^{\frac t2}_0...+\int\limits_{\frac t2}^t...\Big]=cA(I_1+I_2).$$
Regarding $I_1$,  consider first the case $0<t<1$. By the standard change of variables, we have 
$$I_1\leq cAC_0\int\limits^{\frac t2}_0\sqrt{\frac t\tau}
\frac 1{\sqrt{t-\tau}}d\tau$$
with 
$$C_0=\int\limits_{\mathbb R^3}\exp\{-|u|^2\}|u|du.$$
And thus $I_1\leq cA$. In the second case $t\geq 1$,
$$I_1\leq c|\Omega|\int\limits^{\frac t2}_0\sqrt{\frac t\tau}
\frac 1{(t-\tau)^2}d\tau\leq c|\Omega|\frac 1t\leq c|\Omega|.$$

Now, let us evaluate $I_2$. Obviously,
$$I_2\leq c\int\limits^t_{\frac t2}\frac 1{(t-\tau)^2}\int\limits_\Omega\exp\Big\{-\frac {|x-y|^2}{4(t-\tau)}\Big\}\frac {|x-y|}{\sqrt{t-\tau}}dy d\tau.$$
Make change of variables $\vartheta=t-\tau$, then
$$I_2\leq c\int\limits^{\infty}_0 \frac 1{\vartheta^2}\int\limits_\Omega
\exp\Big\{-\frac {|x-y|^2}{4\vartheta}\Big\}\frac {|x-y|}{\sqrt{\vartheta}}dy d\vartheta=$$
$$=c\int\limits^1_0...+c\int\limits^\infty_1...=J_1+J_2.$$
For $J_1$, we have
$$J_1\leq cC_0\int\limits^1_0\frac 1{\sqrt\vartheta}d\vartheta\leq cC_0.$$ Finally, $J_2$ is bounded as follows:
$$J_2\leq c\int\limits^\infty_1\frac 1{\vartheta^2}d\vartheta\leq c|\Omega|.$$

The quantity $J$ is estimated in the same way. Lemma \ref{localbesov} is proved.

\textbf{Acknowledgement} The first author is supported by the grant RFBR 17-01-00099-a.
The second author thanks Professors Cheng He and
Zhifei Zhang for helpful discussions.

\end{document}